\theoremstyle{plain}
\newtheorem{theorem}{Theorem}[section]
\newtheorem{proposition}[theorem]{Proposition}
\newtheorem{lemma}[theorem]{Lemma}
\theoremstyle{remark}
\newtheorem{remark}[theorem]{Remark}
\theoremstyle{definition}
\title{On sequences of projections of the cubic lattice}
\author{Antonio Campello  \\ \small{Institute of Mathematics, Statistics} \\ \small{ and Computer Science} \\  \small{University of Campinas} \\ \small{ 13083-859, Campinas-SP, Brazil} \\
            \small{campello@ime.unicamp.br}       \and
        Jo\~ao Strapasson \\ \small{School of Applied Sciences} \\ \small{University of Campinas} \\ \small{13484-350, Limeira-SP, Brazil} \\
            \small{joao.strapasson@fca.unicamp.br}
}
\date{}
\begin{document}
\maketitle


\begin{abstract}
In this paper we study sequences of lattices which are, up to similarity, projections of $\mathbb{Z}^{n+1}$ onto hyperplanes $\bm{v}^{\perp}$, with $\bm{v} \in \mathbb{Z}^{n+1}$. We show a sufficient condition to construct sequences converging at rate $O(1/ \left\| \bm{v} \right\|^{2/n})$ to integer lattices and exhibit explicit constructions for some important families of lattices. The problem addressed here arises from a question of communication theory.
\end{abstract}
{{\bf Keywords:} Projections - Lattices - Dense Packings.}

\section{Introduction}
It was recently proved \cite{Sloane1} that any $n$-dimensional lattice can be approximated by a sequence of lattices such that each element is, up to similarity, the orthogonal projection of the cubic lattice $\mathbb{Z}^{n+1}$ onto a hyperplane determined by a linear equation with integer coefficients. Given a target lattice $\Lambda \subset \mathbb{R}^n$,  it is possible to find a vector $\bm{v} \in \mathbb{Z}^{n+1}$ from the construction in \cite{Sloane1}, such that the distance between $\Lambda$ and a lattice which is equivalent to the projection of $\mathbb{Z}^{n+1}$ onto $\bm{v}^{\perp}$ has order $O(1 / \left\| \bm{v} \right\|^{1/n})$, where $\left\| \bm{v} \right\|$ is the Euclidean norm of $\bm{v}$. A natural question that arises from that result is whether it is possible to improve this convergence. We give a positive answer to this question by showing a sufficient condition to obtain sequences converging to an integer lattice with order $O(1 / \left\| \bm{v} \right\|^{2/n})$. We also show explicit constructions of such sequences for some families of lattices ($D_n$, odd $n$, $D_n^*$) and exhibit a table of which is, to our knowledge, the best sequences of projection lattices in the sense of the tradeoff between density and $\left\| \bm{v} \right\|$.

Apart from the purely geometric interest, the problem of finding sequences of projection lattices with a better order of convergence is motivated by an application in joint source-channel coding of a Gaussian channel \cite{Sueli}. In the aforementioned paper, the authors propose a coding scheme based on curves on flat tori and show that the efficiency of this scheme is closely related to the ``small-ball radius'' of these curves, which can be approximated by the packing radius of a lattice obtained by projecting $\mathbb{Z}^{n+1}$ onto the subspace $\bm{v}^{\perp}$ for $\bm{v} \in \mathbb{Z}^{n+1}$. Given a value $l_0 > 0$, a worth objective to the design of good codes in the sense of \cite{Sueli} is the one of choosing a vector $\bm{v} \in \mathbb{Z}^{n+1}$ with $\left\| \bm{v} \right\| = l_0$ in such a way to maximize

\begin{equation}
r(\bm{v}) = \min_{\bm{n} \in \mathbb{Z}^{n+1}} \min_{t \in \mathbb{R}} \left\| \bm{v}t - \bm{n} \right\|,
\end{equation}
which is the length of the shortest vector of $\Lambda_{v}$, the projection of $\mathbb{Z}^{n+1}$ onto $\bm{v}^{\perp}$. Let $\delta_{\Lambda_{v}}$ be the center density of these lattices (for undefined terms see Section II). Since the volume of $\Lambda_v$ is given by $1/ \left\| \bm{v} \right\|$ (see \cite{FatStrut}), we have:

\begin{equation}
\delta_{\Lambda_{\bm{v}}} = \frac{r(\bm{v})^{n} \left\| \bm{v} \right\|}{2^{n}},
\end{equation}
therefore maximizing $r(\bm{v})$ implies maximizing $\delta_{\Lambda_{\bm{v}}}$.

Another geometrical formulation to this problem is the so-called \textit{fat strut problem}. A ``strut'' is defined as a cylinder anchored at two points in $\mathbb{Z}^{n+1}$ such that its interior does not contain any other integer point. Given $l_0 > 0$, the fat strut problem asks for a vector $\bm{v} \in \mathbb{Z}^{n+1}$ of length $l_0$ that maximizes the radius of the strut anchored at $\bm{0}$ and $\bm{v}$. This problem is shown to be equivalent to the one of finding dense projections of $\mathbb{Z}^{n+1}$ \cite{FatStrut}. Therefore, projection lattices with higher densities imply fat-struts with larger radii, and the problem addressed in this work is related to finding small vectors that attain high density projection lattices. This is done by considering families of projections of $\mathbb{Z}^{n+1}$.

This paper is organized as follows. In Section 2 we summarize some relevant concepts and results on lattices. In Section 3, we derive a sufficient condition to construct good sequences of projection lattices and in Section 4 we exhibit explicit constructions for some well-known lattices. Finally, in Section 5 we present our conclusions.

\section{Preliminaries and Notation}
In this section we give a brief review of some relevant concepts concerning lattices and establish the notation to be used from now on. 

Given $m$ linearly independent vectors $\bm{b}_1,\ldots,\bm{b}_m$ in $\mathbb{R}^n$, a \textit{lattice} $\Lambda$ is the set of all integer linear combinations of these vectors. The matrix $G$ whose rows are the vectors $\bm{b}_i$ is called a \textit{generator matrix} for $\Lambda$ and the matrix $A = G G^t $ is said to be a \textit{Gram matrix} for $\Lambda$. The \textit{determinant} or \textit{discriminant} of $\Lambda$ is defined as $\det \Lambda = \det A$ and corresponds to the square of the volume of any fundamental region for the lattice $\Lambda$. We say that two lattices with generator matrices $G_1$ and $G_2$ are equivalent if there exists an unimodular matrix $U$, an orthogonal matrix $Q$ and a real number $c$ such that $G_1 = c \,\, U \,\, G_2 \,\, Q$. The \textit{density} $\Delta$ of a lattice is the ratio between the volume of a sphere of radius $\rho$ (half of the minimal distance between two distinct lattices points) and the volume of a fundamental region, while the \textit{center density} is defined as $\delta = \Delta/ V_n$ where $V_n$ is the volume of the unitary sphere in $\mathbb{R}^n$. Sometimes we will refer to the center density of a specific lattice $\Lambda$ as $\delta_{\Lambda}$.

Let $G$ be a full-rank generator matrix for $\Lambda$. The \textit{dual lattice} $\Lambda^*$ of $\Lambda$ is the set of all $\bm{x} \in \mbox{span}(G)$ such that $\langle \bm{x}, \bm{y} \rangle$ is an integer number for all $\bm{y} \in \Lambda$, where $\mbox{span}(G)$ is the row space of $G$. One can easily verify that $(GG^t)^{-1} G$ generates $\Lambda^*$. We say that $\Lambda$ is an \textit{integer} (or \textit{rational}) lattice if its generator matrix has integer (rational) entries. All rational lattices are integers up to scale. The cubic lattice $\mathbb{Z}^n$ is the full-dimensional integer self-dual lattice that has the canonical vectors $e_1 = (1,0,\ldots,0), \ldots, e_n = (0,\ldots,0,1)$ as a basis. A list of the densest known packings in some dimensions as well as many other information about lattices can be found in \cite{SloaneBook}.

We say that a sequence of lattices $\Lambda_w$ converges to $\Lambda$ if there exist Gram matrices $A$ for $\Lambda$ and $A_w$ for $\Lambda_w$ such that $\left\|A_w - A \right\|_{\infty} \rightarrow 0$ as $w \to \infty$ where $\left\| M \right\|_{\infty} = \max_{i,j} \left| M_{ij} \right|$. Of course, if $\Lambda_w$ converges in that sense it also converges in any matrix norm. Another matrix norm we use in this work is the Frobenius norm, given by $\left\| M \right\|_{F} = \sqrt{\mbox{tr}(M M^{t})}$.

Finally, we call $\Lambda_1$ a \textit{projection lattice} of $\Lambda_2$ if it is obtained by projecting $\Lambda_2$ onto the subspace orthogonal to a vector $\bm{v} \in \Lambda_2$, the \textit{projection vector}. In this paper, a projection lattice will always be a projection of the cubic lattice onto $\bm{v}^{\perp}$ for $\bm{v} \in \mathbb{Z}^{n+1}$ a primitive vector (i.e., whose entries have greatest common divisor equal to $1$). In the context of the projection lattices, the results in \cite{FatStrut} and \cite{Sloane1} are remarkable. The first one gives an achievable bound for the density of the projection lattices comparable to the so-called Minkowski-Hlawka bound while the second one states that every lattice can be approximated by a sequence of lattices that are equivalent to projection lattices. More formally, given a $n$-dimensional lattice $\Lambda$, it is shown that for every $\varepsilon > 0$, there is a vector $\bm{v} \in \mathbb{Z}^{n+1}$ and a constant $c$ such that there is a Gram matrix $A_v$ for the lattice obtained by projecting $\mathbb{Z}^{n+1}$ onto $\bm{v}^{\perp}$ and a Gram matrix $A$ for $\Lambda$ satisfying $\left\| A - c A_v \right\| \leq \varepsilon$. In this work we make a slight modification on the construction in \cite{Sloane1} that leads to many other projection lattices sequences converging to a target lattice. We then make an error analysis for these sequences and show a sufficient condition for achieving a faster order of convergence, as well as explicit constructions for some important lattices.
\section{Motivation}
Considering the coding scheme mentioned in Section 1, there are two problems that can arise:

\begin{enumerate}
\item Given a certain radius $r_0$, what is the vector $\bm{v} \in \mathbb{Z}^{n+1}$ that maximizes $\left\| \bm{v} \right\|$ s.t. $r(\bm{v}) = r_0$? Equivalently: given a minimum distance for the projection lattice, what is the vector $\bm{v} \in \mathbb{Z}^{n+1}$ s.t. the projection of $\bm{v} \in \mathbb{Z}^{n+1}$ has maximal density? In this case, we want to solve the following maximization problem:

\begin{equation} \max_{\bm{v} \in \mathbb{Z}^{n+1}} \left\| \bm{v} \right\| \quad \mbox{ subject to } r(\bm{v}) = r_0 \label{eq:MAX1}.\end{equation}

\item Conversely, given a length $l_0$ what is the the vector $\bm{v} \in \mathbb{Z}^{n+1}$ with $\left\| \bm{v} \right\| = l_0$ for which $r(\bm{v})$ is a maximum ? Equivalently, we want to find the solution to:
\begin{equation} \max_{\bm{v} \in \mathbb{Z}^{n+1}} r(\bm{v}) \quad \mbox{ subject to } \left\| \bm{v} \right\| = l_0 \label{eq:MAX2}.\end{equation}
\end{enumerate}

We illustrate these two problems in the case $n = 4$. We run an exhaustive search to solve the maximization problem \eqref{eq:MAX2} for $2 \leq l_0 \leq \sqrt{270478}$ (i.e., fixing $\left\| \bm{v} \right\| = l_0$). Some examples are illustrated in Table \ref{tabela1}. From this table, we can guess a good solution for the problem \eqref{eq:MAX1}, where $r_0$ is fixed.

For instance, take $r_0 = 0.16385$. The fourth element of the family of projection lattices onto $(1, 2w^2 - w + 1, 2w^2 + w + 1, 4w^2 + 3w)^{\perp}$ has minimal distance $0.163858$, center density $0.164452$ and squared norm $89425$. Among the vectors of similar norm, we can find similar performances (e.g., the lattice produced by the vector $(1, 157, 164, 195)$ has minimal distance, center density and squared norm equal to $ 0.16386$, $0.164594$ and $89571$ respectively).

\begin{footnotesize}
\begin{table}[h]
\label{tabela1}
$$\begin{array}{|c||c|c|c|}
\hline
\mbox{projection vector } \bm{v} &  \mbox{Center density} & \mbox{Minimal norm} &  \left\| \bm{v} \right\|_2^2 \\
\hline
 (1,29,37,268) & 0.173511 & 0.172147 & 74035 \\
\hline
 (1,56,185,196) & 0.16502 & 0.168637 & 75778 \\
\hline
 (1,121,163,187) & 0.170589 & 0.170362 & 76180 \\
\hline
 (1,33,80,265) & 0.16473 & 0.16783 & 77715 \\
\hline
 (1,98,125,230) & 0.168027 & 0.168793 & 78130 \\
\hline
 (1,107,141,222) & 0.166704 & 0.167472 & 80615 \\
\hline
 (1,42,181,215) & 0.166423 & 0.167331 & 80751 \\
\hline
 (1,8,110,265) & 0.165716 & 0.166535 & 82390 \\
\hline
 (1,12,84,282) & 0.164198 & 0.164612 & 86725 \\
\hline
 (1,91,153,236) & 0.166189 & 0.165065 & 87387 \\
\hline
 (1,119,152,224) & 0.165562 & 0.16484 & 87442 \\
\hline
 (1,88,121,256) & 0.16497 & 0.164493 & 87922 \\
\hline
 (1,8,64,292) & 0.164452 & 0.163858 & 89425 \\
\hline
\end{array}$$
\caption{Dense projection lattices of $\mathbb{Z}^4$ onto $\bm{v}^{\perp}$, for $74035 \leq \left\| \bm{v} \right\| \leq 89425$.}
\end{table}
\end{footnotesize}

Now, take $r_0 = 0.1721$ and the fourth element of the sequence of projections determined by the vectors $(1, 2w^2-w+1, 2w^2+w+1, 4w^3+3w)$ (which is an ``optimal'' sequence in the sense discussed in this paper). It has minimal distance, center density and squared norm equal to $0.172147$, $0.173511$ and $74035$. Comparing this to the vector $(1, 13, 75, 244)$, which is ``close'' to $(1,29,37,268)$ in the sense of these parameters, we find out that although the last one has a slightly smaller norm ($65331$), its center density is much smaller ($0.163112$). There is clearly a tradeoff between these parameters.
Finally, let us fix the vector length around $\|\bm{v}\|_2^2=89425$. The fourth element of the first family above-cited produces a projection lattice with parameters ``close'' to the ones of the vector $(1,31,38,295)$ (minimal distance, center density and squared norm equal to $0.163988$, $0.164852$ and $89431$).  On the other hand, there is no vector (with norm up to $89425$) producing denser lattices than the fourth element of the optimal sequence, but within the interval $74035$ and $89425$ there are many other vectors that generate lattices with density superior to $0.163858$, as shown in Table 1.

\section{Convergence rate analysis}

Let $\Lambda \subset \mathbb{R}^n$ be a lattice with a $n \times n$ generator matrix $\bar{G}$ and consider $n \times n$ generator matrix $\bar{G}^*$ to $\Lambda^*$. Let $G^{*} = \left[ \bar{G}^{*} \,\,\, 0_{n \times 1} \right]$ and $G = \left[ \bar{G} \,\,\, 0_{n \times 1} \right]$. We define $\Lambda_w^*$, $w \in \mathbb{N}$ as the sequence of $n$-dimensional lattices in $\mathbb{R}^{n+1}$ associated to the generator matrices

\begin{equation}
G_w^{*} = wG^* + P.
\label{eq:1}
\end{equation}
where $P$ is an $n \times (n+1)$ integer matrix which we will call a \textit{perturbation matrix}. The correspondent Gram matrices for $\Lambda_w^*$ are
\begin{equation}
A_w^{*} = w^2 A^* + w(G^*P^t + PG^{*t}) + PP^t \triangleq w^2 A^* + w Q_1 + Q_0,
\label{eq:2}
\end{equation}
where $A^* = G^* G^{*t}$ is a Gram matrix for $\Lambda^*$. We define $H_w = (G_w^*)_{(1,\ldots,n),(2,\ldots,n+1)}$ as the matrix consisting on the last $n$ columns of $G_w^*$. If $G$ is a lower triangular matrix and $P = \left[0_{n \times 1} \,\,\, I_{n \times n} \right]$ it is shown in \cite{Sloane1} that each $\Lambda_w$ (dual of $\Lambda_w^*$) is the projection of $\mathbb{Z}^{n+1}$ onto the subspace orthogonal to some vector $\bm{v} \in \mathbb{Z}^{n+1}$ and the sequence of Gram matrices $(A_w^*)/c = (G_w^*G_w^{t})/c \rightarrow A^*$ as $w \rightarrow \infty$ (hence, $cA_w \rightarrow A$), for $c = w^2$. A natural extension of this result is the following:

\begin{lemma} Let $G_w^*$ (\ref{eq:1}) and $H_w$ be the matrices defined above. If $H_w$ is unimodular for all $w \in \mathbb{N}$ then the lattices $\Lambda_w$ associated to the generator matrices $G_w$ are projection lattices.
\label{genCons}
\end{lemma}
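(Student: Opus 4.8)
The plan is to exploit the classical duality between projections and sections of a self-dual lattice, transferring the claim from $\Lambda_w$ to the explicitly generated lattice $\Lambda_w^{*}$. For any full-rank lattice $L$ and subspace $W$ one has $(\pi_W(L))^{*}=L^{*}\cap W$, the duals being taken inside $W$; applied to $L=\mathbb{Z}^{n+1}$ and $W=\bm{v}^{\perp}$, and using that $\mathbb{Z}^{n+1}$ is self-dual, this reads $\pi_{\bm{v}^{\perp}}(\mathbb{Z}^{n+1})=(\mathbb{Z}^{n+1}\cap\bm{v}^{\perp})^{*}$. Since similarity is preserved under dualization, it suffices to show that $\Lambda_w^{*}$ equals a cross-section $\mathbb{Z}^{n+1}\cap\bm{v}^{\perp}$ for some primitive $\bm{v}\in\mathbb{Z}^{n+1}$; dualizing back then identifies $\Lambda_w=(\Lambda_w^{*})^{*}$ with the projection lattice $\pi_{\bm{v}^{\perp}}(\mathbb{Z}^{n+1})$. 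Up to the similarity already built into the notion of a projection lattice, I may take $\bar{G}^{*}$, hence $G^{*}$, to have integer entries; together with $P$ integral this makes $G_w^{*}=wG^{*}+P$ an integer matrix, so that $\Lambda_w^{*}$ is a rank-$n$ sublattice of $\mathbb{Z}^{n+1}$.

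The heart of the argument is to prove that this sublattice is \emph{saturated}, that is $\Lambda_w^{*}=\mathbb{Z}^{n+1}\cap\mbox{span}(G_w^{*})$, for this is exactly what identifies a rank-$n$ integer sublattice with a section $\mathbb{Z}^{n+1}\cap\bm{v}^{\perp}$, where $\bm{v}$ is the primitive integer normal to its span. Write $G_w^{*}=[\,\bm{c}_w\ \ H_w\,]$, with $\bm{c}_w$ the first column. Here the hypothesis on $H_w$ enters: appending the row $\bm{e}_1=(1,0,\ldots,0)$ yields $\left[\begin{smallmatrix}G_w^{*}\\ \bm{e}_1^{t}\end{smallmatrix}\right]$, whose determinant is $\pm\det H_w=\pm1$ by cofactor expansion along the last row, so the rows of $G_w^{*}$ together with $\bm{e}_1$ form a $\mathbb{Z}$-basis of $\mathbb{Z}^{n+1}$. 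The primitive normal to $\mbox{span}(G_w^{*})$ is then $\bm{v}=(1,\,-H_w^{-1}\bm{c}_w)$, which is integral since $H_w^{-1}$ is (as $H_w$ is unimodular), primitive since its first coordinate is $1$, and orthogonal to every row because $G_w^{*}\bm{v}=\bm{c}_w-H_wH_w^{-1}\bm{c}_w=\bm{0}$. Finally, given $\bm{z}\in\mathbb{Z}^{n+1}\cap\bm{v}^{\perp}$, expand $\bm{z}$ in the above basis: since every row of $G_w^{*}$ lies in $\bm{v}^{\perp}$ while $\langle\bm{e}_1,\bm{v}\rangle=1\neq0$, the $\bm{e}_1$-coordinate of $\bm{z}$ must vanish, whence $\bm{z}\in\Lambda_w^{*}$; thus $\mathbb{Z}^{n+1}\cap\bm{v}^{\perp}=\Lambda_w^{*}$.

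Feeding this back through the projection--section duality identifies $\Lambda_w$ with $\pi_{\bm{v}^{\perp}}(\mathbb{Z}^{n+1})$, as required. The integrality reductions and the cofactor computation are routine; I expect the only real point of care to be the saturation step, where the unimodularity of the single minor $H_w$ must be leveraged to guarantee that $\Lambda_w^{*}$ is \emph{all} of $\mathbb{Z}^{n+1}\cap\mbox{span}(G_w^{*})$ and not merely a finite-index subgroup of it --- the explicit basis completion by $\bm{e}_1$ above is precisely what settles this --- together with the bookkeeping that ensures the scaling and rotation implicit in ``projection lattice'' survive the passage to duals.
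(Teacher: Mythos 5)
Your proof is correct and follows essentially the same route as the paper's: both identify the primitive normal $\bm{v}_w=(1,-H_w^{-1}\bm{c}_w)$, show that $\Lambda_w^*$ is the section $\mathbb{Z}^{n+1}\cap\bm{v}_w^{\perp}$, and dualize to recognize $\Lambda_w$ as the projection of $\mathbb{Z}^{n+1}$ onto $\bm{v}_w^{\perp}$. The only difference is that where the paper passes to the normal form $H_w^{-1}G_w^*=[\hat{\bm{v}}_w \,\, I_{n\times n}]$ and cites the lifting-construction argument of \cite{FatStrut} for the section identity, you prove that identity directly via the basis completion by $\bm{e}_1$, which is a self-contained rendering of the same step.
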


\begin{proof}
Since $H_w$  is unimodular so is its inverse and $\Lambda_w^*$ is also generated by $H_w^{-1} G_w^*$. On the other hand, $H_w^{-1} G_w^* = \left[\hat{\bm{v}}_w \,\,\, I_{n \times n} \right]$ and for the same arguments of \cite{FatStrut}, $\Lambda_w^*$ is the intersection of $\mathbb{Z}^{n+1}$ with the subspace orthogonal to the vector $\bm{v}_w = (1, -\hat{\bm{v}}_w)$, which is the dual of the projection of $\mathbb{Z}^{n+1}$ onto $\bm{v}_w^{\perp}$. 
\end{proof}

In the sequel we will consider the analysis of the convergence order of the sequence $w^2 A_w$ (i.e., the sequence of duals of (\ref{eq:1})). We start by analysing the sequence (\ref{eq:1}). It is straightforward to show that $1/w^2 A_w^* \to A^*$ with order $O(1/w)$ since

$$\left\| A^* - (1/w^2)A_w^* \right\|_{\infty} = \left\| \frac{Q_1}{w} + \frac{Q_0}{w^2} \right\|_{\infty} = O(1/w).$$

If $Q_1 = 0$, we obtain an $O(1/w^2)$ convergence order, as it happens in the example of \cite{Sloane1}, Section 4. More generally, if $Q_1 = \alpha A^*$, we can evaluate $A_w^*$ as follows:

$$A_w^* = w^2 A^* + \alpha w A^*  + Q_0 = A^* \left(w+\frac{\alpha}{2}\right)^2 - \frac{\alpha^2 A^*}{4}+PP^t,$$
therefore the sequence $A_w^*/(w+\alpha/2)^2$ (i.e., taking $c = (w+\alpha/2)^2$) converges to $A_w^*$ with rate $O(1/w^2)$. Nevertheless, our main objective is the analysis of the sequence $\Lambda_w$. In what follows, we will show that the asymptotic behavior of $\Lambda_w$ is essentially the same as $\Lambda_w^*$, although for finite $w$ they may differ. In order to show this, we will need the following lemma.

\begin{lemma}
Let $A_w^{*}$ be Gram matrices for $\Lambda_w^{*}$ as in Equation (\ref{eq:2}). There exists $w_o$ such that, for $w \geq w_o$, the projection lattices $\Lambda_w$ have Gram matrix

\begin{equation}
A_w = \sum_{k=0}^\infty \left[\frac{A}{w^2}\left(-wQ_1 - Q_0\right)\right]^k \frac{A}{w^2}.
\end{equation}
\label{LemaNeumman}
\end{lemma}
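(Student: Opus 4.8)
The plan is to recognize the displayed series as the Neumann expansion of $(A_w^*)^{-1}$ and to identify $A_w$ with this inverse through lattice duality. The starting observation is structural: by the construction preceding Lemma~\ref{genCons}, $\Lambda_w$ is precisely the projection lattice whose dual is $\Lambda_w^*$, so $\Lambda_w = (\Lambda_w^*)^*$. Recall from Section~2 that the dual of an $n$-dimensional lattice, taken within its own span and generated by a matrix with Gram matrix $M$, has Gram matrix $M^{-1}$ (this is the content of the generator $(GG^t)^{-1}G$). Applying this to $\Lambda_w^*$ shows that $(A_w^*)^{-1}$ is a valid Gram matrix for $\Lambda_w$; this is the $A_w$ we will work with. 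Hence the lemma reduces entirely to expanding $(A_w^*)^{-1}$ in closed form.

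Applying the same duality fact to $\Lambda^*$ itself, the matrix $A := (A^*)^{-1}$ is a Gram matrix for $(\Lambda^*)^* = \Lambda$, which is exactly the $A$ appearing in the statement. I would then use $(A^*)^{-1} = A$ to factor Equation~(\ref{eq:2}) as
$$A_w^* = w^2 A^*\Big(I + \tfrac{A}{w^2}(wQ_1 + Q_0)\Big),$$
so that, writing $N = \tfrac{A}{w^2}(wQ_1 + Q_0)$, inversion gives $(A_w^*)^{-1} = (I+N)^{-1}\tfrac{A}{w^2}$. Invoking the Neumann series $(I+N)^{-1} = \sum_{k\ge 0}(-N)^k$ and observing that $(-N)^k = \big[\tfrac{A}{w^2}(-wQ_1 - Q_0)\big]^k$ produces exactly the claimed expression, since the trailing factor $\tfrac{A}{w^2}$ is common to every term.

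The one genuine point requiring justification is the existence of $w_o$ guaranteeing convergence of the Neumann series, i.e. $\|N\| < 1$ in some submultiplicative norm. This is immediate from $N = \tfrac{A Q_1}{w} + \tfrac{A Q_0}{w^2} \to 0$ as $w \to \infty$: one takes any $w_o$ beyond which $\|N\| < 1$, and for $w \ge w_o$ the series converges (this also reconfirms invertibility of $A_w^*$, which is in any case automatic as it is a positive-definite Gram matrix). I expect this estimate to be the only delicate step, and it is routine, being a bound on $\|AQ_1\|/w + \|AQ_0\|/w^2$; the conceptual core is simply the identification $A_w = (A_w^*)^{-1}$ combined with the factorization above.
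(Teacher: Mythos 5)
Your proposal is correct and follows essentially the same route as the paper: identify $A_w$ with $(A_w^*)^{-1}$ via duality, expand the inverse as a Neumann series, and secure convergence by noting that the perturbation term $\frac{A}{w^2}(wQ_1+Q_0) = \frac{AQ_1}{w}+\frac{AQ_0}{w^2}$ tends to zero, so it has norm less than $1$ for all sufficiently large $w$. Your write-up is in fact slightly more explicit than the paper's (which simply cites the Neumann series and sketches the smallness estimate entrywise), but there is no substantive difference in approach.
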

\begin{proof}
According to the matrix Neumman series \cite[Ch. 3, Eq. (3.8.3)]{Meyer}:

\begin{equation}
A_w = (A_w^*)^{-1} = (w^2 A^* + wQ_1 + Q_0)^{-1} = \sum_{k=0}^\infty \left[\frac{A}{w^2}\left(-wQ_1 - Q_0\right)\right]^k \frac{A}{w^2}
\end{equation}

\noindent  provided that $\displaystyle \lim_{k\rightarrow \infty} \left\| \left[(w^2 A^*)^{-1} (wQ_1+Q_0) \right]^k \right\| = 0$ for any matrix norm. Since all entries of $(w Q_1 + Q_0)$ have order $O(w)$ and the entries of $(w^2 A^*)^{-1}$ are $O(w^2)$, there exists $w_o$ such that, for $w \geq w_o$, each entry of the matrix $(w^2 A^*)^{-1} (wQ_1+Q_0)$ is arbitrarily close to zero. Taking the matrix power, we can make $\left[ (w^2 A^*)^{-1} (wQ_1+Q_0) \right]^k < \varepsilon$ for any $\varepsilon > 0$ and the result follows. 
\end{proof}
As a consequence of the above lemma, we have:

\begin{eqnarray*} \left\| A - w^2 A_w \right\| &=& \left\|\sum_{k=1}^\infty \left[\frac{A}{w^2}\left(-wQ_1 - Q_0\right)\right]^k A \right\| \approx \\
&\approx& \left\| \frac{A Q_1 A}{w} + \frac{A Q_0 A}{w^2} \right\| = O(1/w).
\end{eqnarray*}
Again, if $Q_1 = \alpha A^*$ ($\Leftrightarrow AQ_1A = \alpha A$), we obtain an $O(1/w^2)$ convergence through the evaluation

$$\frac{A}{w^2} - \frac{\alpha A}{w^3} + \frac{A Q_0 A}{w^4} = A \left(\frac{1}{w}- \frac{\alpha}{2w^2}\right)^2 + \frac{\alpha^2 A}{4w^4} + \frac{A Q_0 A}{w^4}$$
so that the distance from  $A_w/\left(1/w- \alpha/2w^2\right)^2$ to $A$ has order $O(1/w^2)$.
\begin{remark}
Since  $\displaystyle \lim_{w\rightarrow \infty} \left(\frac{1}{w}-\frac{\alpha}{2 w^2}\right)^2 \left(\frac{\alpha}{2}+w\right)^2 = 1$, we have:

$$\left(A_w/\left(1/w- \alpha/2w^2\right)^2\right)^{-1} \approx A_w^*/(w+\alpha/2)^2.$$

\end{remark}

We can now prove our main theorem concerning the convergence analysis of projection lattices sequences.

\begin{theorem}
\label{teo1}
Let $\Lambda$ be an $n$-dimensional lattice with generator matrix $\bar{G}$ and Gram matrix $A$ and $\Lambda^* \subseteq \mathbb{Z}^{n}$ its dual with generator and Gram matrices $\bar{G}^*$ and $A^*$ respectively. Now, let $\Lambda_w^*$ be the sequence of lattices with generator matrices given by (\ref{eq:1}) satisfying:

\begin{equation}
\label{cond1}
\det(H_w) = \pm 1, \forall w \in \mathbb{N} \mbox{ and }
\end{equation}
\begin{equation}
\label{cond2}
\exists \alpha \mbox{ such that } Q_1 = \alpha A^*,
\end{equation}
with $H_w$, $A_w^*$, $Q_1$ as previously defined and  $A_w = (A_w^*)^{-1}$ . Then each $\Lambda_w = (\Lambda_w^*)^*$ is a projection lattice of $\mathbb{Z}^{n+1}$ onto the orthogonal subspace of a vector $\bm{v}_w \in \mathbb{Z}^{n+1}$ whose infinity norm satisfies \begin{equation}
\label{eq:dependeRepr}
\left\|\bm{v}_w\right\|_{\infty}= \left| \sqrt{\det{\Lambda}^*}w^n + O(w^{n-1}) \right|
\end{equation} 
for sufficiently large $w$, and there exists a $c_w \in \mathbb{R}$ such that
\begin{equation}
\label{eq:quadratica}
\left\| A - c_w A_w \right\|_{\infty} = O \left( \frac{1}{w^2} \right) = O \left( \frac{1}{{\left\|\bm{v}_w\right\|_{\infty}^{2/n}}} \right) \rightarrow 0 \mbox{, as } w \to \infty.
\end{equation}
\end{theorem}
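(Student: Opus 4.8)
The plan is to separate the statement into its three assertions and dispatch each using material already assembled in the excerpt, reserving the real work for the norm estimate (\ref{eq:dependeRepr}).

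First, the projection-lattice claim is immediate. Condition (\ref{cond1}) says $\det H_w = \pm 1$, i.e.\ $H_w$ is unimodular, which is precisely the hypothesis of Lemma \ref{genCons}. Hence each $\Lambda_w = (\Lambda_w^*)^*$ is the projection of $\mathbb{Z}^{n+1}$ onto $\bm{v}_w^{\perp}$, with $\bm{v}_w = (1,-\hat{\bm{v}}_w) \in \mathbb{Z}^{n+1}$ the vector produced there. Next, the convergence rate follows from the displayed computation immediately after Lemma \ref{LemaNeumman}: under condition (\ref{cond2}) we have $Q_1 = \alpha A^*$, hence $A Q_1 A = \alpha A$, and completing the square shows that with $c_w = \left(1/w - \alpha/2w^2\right)^{-2}$ one gets $\left\| A - c_w A_w \right\|_{\infty} = O(1/w^2)$. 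The second equality in (\ref{eq:quadratica}) then follows once (\ref{eq:dependeRepr}) is established, since $\left\| \bm{v}_w \right\|_{\infty} = \Theta(w^n)$ yields $w^2 = \Theta\left(\left\|\bm{v}_w\right\|_{\infty}^{2/n}\right)$.

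The substantive part is (\ref{eq:dependeRepr}), and here I would compute $\bm{v}_w$ explicitly as the primitive integer vector orthogonal to every row of the $n \times (n+1)$ matrix $G_w^*$. Its coordinates are the signed maximal minors (the generalized cross product), $(\bm{v}_w)_i = \pm \det\big(G_w^* \text{ with column } i \text{ deleted}\big)$; these are integers because $\Lambda^* \subseteq \mathbb{Z}^n$ and $P$ are integral, and their gcd is $1$ since deleting the first column yields $\det H_w = \pm 1$, so $\bm{v}_w$ is primitive. Each minor is a polynomial of degree $n$ in $w$, as $G_w^* = w G^* + P$ has entries linear in $w$. The key observation is that the last column of $G^* = \left[ \bar{G}^* \,\,\, 0 \right]$ vanishes: deleting a column $i \leq n$ leaves that zero column inside the leading matrix $w G^*$, so the $w^n$-coefficient of $(\bm{v}_w)_i$ is zero and $(\bm{v}_w)_i = O(w^{n-1})$; whereas deleting column $n+1$ leaves exactly $\bar{G}^*$, whose determinant is the degree-$n$ coefficient, giving $(\bm{v}_w)_{n+1} = \pm\left(\det \bar{G}^*\, w^n + O(w^{n-1})\right)$. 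Since $(\det \bar{G}^*)^2 = \det A^* = \det \Lambda^*$, this coordinate dominates all others for large $w$, and $\left\| \bm{v}_w \right\|_{\infty} = \left| \sqrt{\det \Lambda^*}\, w^n + O(w^{n-1}) \right|$, exactly as claimed. As a cross-check, the Cauchy-Binet identity gives $\sum_i (\bm{v}_w)_i^2 = \det\!\big(G_w^* (G_w^*)^t\big) = \det A_w^*$, matching $\left\| \bm{v}_w \right\|_2 = \sqrt{\det A_w^*}$ and the volume relation $\mathrm{vol}(\Lambda_w) = 1/\left\| \bm{v}_w \right\|_2$ recalled in the introduction.

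The main obstacle is precisely this degree bookkeeping in the maximal minors: one must verify that deleting any of the first $n$ columns genuinely annihilates the $w^n$ term (so those coordinates drop to $O(w^{n-1})$) and that the surviving coordinate carries the constant $\sqrt{\det \Lambda^*}$. The vanishing last column of $G^*$ is what makes both facts work, and it is the crux of the whole argument; once it is in place, the rate estimate and the passage from $O(1/w^2)$ to $O\!\left(1/\left\|\bm{v}_w\right\|_{\infty}^{2/n}\right)$ are routine.
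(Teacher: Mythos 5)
Your proposal is correct and follows essentially the same route as the paper: Lemma \ref{genCons} for the projection claim, the Neumann-series computation with $c_w$ built from completing the square for the $O(1/w^2)$ rate, and the generalized cross product (signed maximal minors of $G_w^*$) with degree bookkeeping exploiting the zero last column of $G^*$ for the norm estimate (\ref{eq:dependeRepr}). Your justification that $\bm{v}_w$ is the primitive cross-product vector (via the first minor being $\det H_w = \pm 1$) is a slightly cleaner packaging of the paper's determinant manipulation, and your $c_w = (1/w-\alpha/2w^2)^{-2}$ fixes an inversion slip in the paper's own statement of $c_w$, but the substance is identical.
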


\begin{proof}
We will first show the validity of the Equation (\ref{eq:dependeRepr}) and then (\ref{eq:quadratica}) will hold for $c_w = \left(1/w- \alpha / 2 w^2\right)^2$ provided Lemma \ref{LemaNeumman} and previous arguments. Let $\bm{\bar{v}}_w$ be the generalized cross product of the rows of $G_w^*$ (see \cite{Spivak}) i.e., $(\bm{\bar{v}}_w)_i = (-1)^{n+i} \,\, | \overline{(G_w^*)_i} |$, where $| \overline{(G_w^*)_i} |$ denotes the determinant of the matrix obtained excluding the $i$-th column of $G_w^*$. According to Lemma \ref{genCons}, the projection vector $\bm{v}_w$ will be given by $\bm{v}_w = (1, -\hat{\bm{v}}_w)$for
$$ \hat{\bm{v}}_w = (H_w)^{-1} (G_w^*)_{1} \Rightarrow (G_w^*)_1 = H_w \hat{\bm{v}}_w = -\sum_{j=2}^{n} (\bm{v}_w)_{j} (G_w^*)_{j}.
$$
Hence:
\begin{equation*}
\begin{split} | \overline{(G_w^*)_i} | &= \det{ \left[ - \sum_{j=2}^{n} (\bm{v}_w)_{j} (G_w^*)_{j} \,\,\, \vert \,\,\, (G_w^*)_{2} \,\,\, \vert \,\,\, \ldots \,\,\, \vert \,\,\,  \widehat{(G_w^*)_{ i}} \,\,\, \vert \,\,\, \ldots \,\,\, \vert \,\,\, (G_w^*)_{n} \right] } \\ & = 
\det{ \left[ -(\bm{v}_w)_{i} (G_w^*)_{i} \,\,\, \vert \,\,\, (G_w^*)_{2} \,\,\, \vert \,\,\, \ldots \,\,\, \vert \,\,\,  \widehat{(G_w^*)_{ i}} \,\,\, \vert \,\,\, \ldots \,\,\, \vert \,\,\, (G_w^*)_{n} \right] } \\
& = (-1)^{i} \left| H_w \right| (\bm{v}_w)_{i} \,\,\, \therefore \,\,\, \bar{\bm{v}}_w = (-1)^{n} \left|{H_w}\right| \bm{v}_w,
\end{split}
\end{equation*}
where $\widehat{(G_w^*)_{ i}}$ means the exclusion of the $i$-th column from the matrix. Thus, up to a change of sign, the projection vector is the cross product of the rows of $G_w^*$. Considering that, it is easy to show that each entry of $\bm{v}_w$ is a polynomial of degree up to $n-1$, excepting for the last one, whose absolute value is $| \det \overline{(G_w^*})_n | $ or $|(\bm{v}_w)_n| = |\det( w \overline{G^*} + \overline{(P)_n})| = |w^n \det{\overline{G^*}} + O(w^{n-1})| $ and this completes the proof. 
\end{proof}

In what follows, we show that the vectors $\bm{v}_w$, the densities of the projection lattices and the convergence rate of the sequence $\left\| A - c_w A_w\right\|_{\infty}$ do not depend on the basis choice for $\Lambda^*$.

\begin{proposition}
\label{eq:MudarDeBase} Let $\bar{G_1}^*$ and $\bar{G_2}^*$ be two generator matrices for $\Lambda^*$ with $A_1^*$ and $A_2^*$ the correspondent Gram matrices. Let $G_1^{*} = \left[ \bar{G_1}^{*} \,\,\, 0_{n \times 1} \right]$ and $G_2^{*} = \left[ \bar{G_2}^{*} \,\,\, 0_{n \times 1} \right]$. Now, take the sequence of lattices $\Lambda_{w,1}^*$ associated to the generator matrices $G_{w,1}^* = wG_1^* + P_1$ and let $H_{w,1} = (G_{w,1}^*)_{(1,\ldots,n),(2,\ldots,n+1)}$ such that conditions (\ref{cond1}) and (\ref{cond2}) hold. There exists $P_2 \in \mathbb{Z}^{n \times (n+1)}$ such that the sequence $G_{w,2}^* = wG_{2}^* + P_2$ satisfies conditions (\ref{cond1}), (\ref{cond2}) and

\begin{equation}
H_{w,1}^{-1} G_1^* = H_{w,2}^{-1} G_2^*.
\label{vIgual}
\end{equation}
\end{proposition}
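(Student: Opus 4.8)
The plan is to exploit the fact that two generator matrices for the \emph{same} lattice $\Lambda^*$ differ by a unimodular change of basis, and to transport the perturbation matrix $P_1$ through that change of basis. Since $\bar{G_1}^*$ and $\bar{G_2}^*$ both generate $\Lambda^*$, there is an integer matrix $U$ with $\det U = \pm 1$ such that $\bar{G_1}^* = U \bar{G_2}^*$; appending the zero column preserves this, so $G_1^* = U G_2^*$. The single guess that makes everything work is to set $P_2 = U^{-1} P_1$. With this choice one gets immediately $G_{w,2}^* = w G_2^* + P_2 = U^{-1}(w G_1^* + P_1) = U^{-1} G_{w,1}^*$, so the entire perturbed sequence is just the left $U^{-1}$-translate of the first one.

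First I would record that $P_2$ is integral: $U^{-1}$ is an integer matrix because $U$ is unimodular and $P_1$ is integral by hypothesis, so $P_2 = U^{-1}P_1 \in \mathbb{Z}^{n\times(n+1)}$. Next, reading off the last $n$ columns of $G_{w,2}^* = U^{-1}G_{w,1}^*$ gives $H_{w,2} = U^{-1}H_{w,1}$, whence $\det H_{w,2} = \det(U^{-1})\det(H_{w,1}) = \pm 1$, establishing condition (\ref{cond1}). The final identity (\ref{vIgual}) is then almost automatic: from $H_{w,2}^{-1} = H_{w,1}^{-1}U$ together with $G_{w,2}^* = U^{-1}G_{w,1}^*$ (and likewise $G_2^* = U^{-1}G_1^*$), the factor $U$ cancels, giving $H_{w,2}^{-1}G_{w,2}^* = H_{w,1}^{-1}G_{w,1}^*$, so the two constructions yield the same $\hat{\bm{v}}_w$ and hence the same projection vector $\bm{v}_w$.

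The step that needs the most care, and the one I would treat as the crux, is condition (\ref{cond2}). Writing $Q_{1,2} = G_2^*P_2^t + P_2 G_2^{*t}$ and substituting $G_2^* = U^{-1}G_1^*$, $P_2 = U^{-1}P_1$, I expect the bilinear structure to produce the conjugation $Q_{1,2} = U^{-1}(G_1^*P_1^t + P_1 G_1^{*t})U^{-t} = U^{-1}Q_{1,1}U^{-t}$. Using the hypothesis $Q_{1,1} = \alpha A_1^*$ together with the transformation rule for Gram matrices, $A_1^* = G_1^*G_1^{*t} = U A_2^* U^t$, the two unimodular factors collapse: $Q_{1,2} = \alpha\, U^{-1}A_1^*U^{-t} = \alpha\, U^{-1}(U A_2^* U^t)U^{-t} = \alpha A_2^*$. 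Thus condition (\ref{cond2}) holds with the \emph{same} scalar $\alpha$, which is exactly what keeps the convergence rate of Theorem \ref{teo1} unchanged. I would close by remarking that, since $\bm{v}_w$ is unaffected, the densities of the $\Lambda_w$ and the error $\|A - c_w A_w\|_\infty$ are basis-independent, as claimed.
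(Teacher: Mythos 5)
Your proposal is correct and follows essentially the same route as the paper: both take $U$ unimodular with $G_1^* = U G_2^*$, set $P_2 = U^{-1}P_1$, verify $Q_{1,2} = U^{-1}Q_{1,1}U^{-t} = \alpha A_2^*$, and observe $H_{w,2} = U^{-1}H_{w,1}$ to get condition (\ref{cond1}) and the identity (\ref{vIgual}). Your write-up is in fact slightly more careful than the paper's (which mislabels which condition is being checked and leaves the integrality of $P_2$ and the cancellation in (\ref{vIgual}) implicit).
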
 

\begin{proof}
Since $G_1^*$ and $G_2^*$ generate the same lattice, there exists an unimodular matrix $U$ such that $G_1^* = U G_2^*$. We will show that $P_2 = U^{-1}P_1$ satisfies the three properties above.

\noindent For condition (\ref{cond1}), we have:
\begin{equation}
\begin{split}
&G_2^* P_2^t + P_2^* G_2^{*t} = U^{-1} G_1^* P_1^t U^{-t} + U^{-1} P_2^* G_1^{*t} U^{-t} = \\ 
&= U^{-1}(G_1^* P_1^t + P_1 G_1^*t)U^{-t} = \alpha U^{-1} A_1^* U^{-t} = \alpha A_2^*
\end{split}
\end{equation}
\noindent For condition (\ref{cond2}) and Equation (\ref{vIgual}), just observe that $H_{w,2} = U^{-1} H_{w,1}$. 
\end{proof}

Although the search for good sequences is independent of the basis choice, distinct representations (i.e., geometrically similiar) for the same lattice can yield substantially different sequences in terms of the densities of $\Lambda_w$ and the norm of each $\bm{v}_w$, as shown in Equation (\ref{eq:dependeRepr}) and illustrated in examples 3.1 and 3.4.

When there is no perturbation matrix such that $Q_1 = \alpha A^*$, the convergence of the sequence $A_w/w^2$ is  related to the coefficient $A (Q_1 - \alpha A^*) A$. We can thus try the solution of the problem

$$\min \left\|A(G^*P^t + PG^{*t} - \alpha A^*) A\right\|$$
\begin{equation} \mbox{s. t. } \,\, |\det H_w| = 1\,\,\mbox{, } \forall w \in \mathbb{N}
\label{eq:PNLI}
\end{equation}
$$P \in \mathbb{Z}^{n \times (n+1)}$$
$$\alpha \in \mathbb{Z}$$

\noindent which is a non-linear problem of $n^2 + n$ integer variables. In fact, the constraint $\alpha \in \mathbb{Z}$ can be relaxed to $\alpha \gcd((A^*)_{ij}) \in \mathbb{Z} $ and the complexity of this problem is mainly caused by the constraint $|\det H_w| = 1$. Hence, sometimes it is worth considering1 sub-optimal solutions. One possibility is to take a lower triangular matrix $G^*$ and consider the problem:

$$\min \left\|A(G^*P^t + PG^{*t} - \alpha A^*)A\right\|$$
\begin{equation} \mbox{s. t. } \,\, P \in \mathbb{Z}^{n \times (n+1)}
\label{eq:PNLIrel}
\end{equation}
$$P_{ij} = 1 \mbox{, if } j = i+1$$
$$P_{ij} = 0 \mbox{, if } j > i+1,$$
with $\alpha$ constrained as above. In this case, the perturbation matrix $P$ will have the structure

$$P = \begin{bmatrix}
P_{11} & 1 & 0 & \cdots & 0 & 0\\
P_{21} & P_{22} & 1 & \cdots & 0 & 0\\
\vdots & \vdots & \vdots & \ddots & \vdots & \vdots\\
P_{n1} & P_{n2} & P_{n3} & \cdots & P_{nn} & 1
\end{bmatrix}$$
and we can drop the constraint $| \det H_w | = 1$ out.

\section{Explicit Constructions}
In the following examples we employ different strategies to generate the projection lattices families, depending on the structure of each target lattice and the feasibility of finding integer solutions satisfying conditions \eqref{cond1} and \eqref{cond2} or solving the non-linear problem \eqref{eq:PNLIrel}.

\subsection{The lattice $a\mathbb{Z}\oplus b\mathbb{Z}$}

As a first example, consider the lattice generated by the matrix $\bar{G} = (1/ab)\mbox{diag}(a,b)$, $a,b \neq 0$, a scaled version of the $\mathbb{Z}^2$ lattice. We can assume w.l.o.g. that $(a,b)=1$. As a generator matrix for its dual, we choose $\bar{G}^* = \mbox{diag}(a,b)$, and hence, taking a general perturbation $P$, we have:

\begin{equation} G_w = \left[
\begin{array}{ccc}
 a w+P_{1,1} & P_{1,2} & P_{1,3} \\
 P_{2,1} & b w+P_{2,2} & P_{2,3}
\end{array}
\right]
\end{equation}
In this case, the condition \eqref{cond2} is equivalent to:

\begin{equation}
P_{11} = \frac{a \alpha }{2},P_{21} = -\frac{b P_{12}}{a} \mbox{ and } P_{22} = \frac{\alpha  b}{2}
\end{equation}
Since $a$ and $b$ have no common factors, $\alpha$ must be even ($\alpha = 2 \beta$, for $\beta \in \mathbb{Z}$) and $P_{12} = k a$. Under these conditions, we calculate the determinant of $H_w$: 
\begin{equation}
\det (H_w) = a k P_{23}-b \beta  P_{13}-b w P_{13}
\end{equation}
and condition \eqref{cond1} will be satisfied iff

 \begin{equation}
P_{13} = 0 \mbox{ and } akP_{23} = \pm 1.
\end{equation}

Hence, by Theorem \eqref{teo1}, any lattice of the form $\mathbb{Z}\oplus b\mathbb{Z}$, $b\neq 0$ can be recovered as a sequence \eqref{eq:1} or projection lattices whose order of convergence is $O(1/\left\|\bm{v} \right\|)$, while for $a \neq 1$ it is not possible to find a perturbation matrix such that the hypotheses of Theorem \eqref{teo1} hold.

As an interesting consequence of this fact, it is not possible to ensure conditions \eqref{cond1} and \eqref{cond2} to the Example 1 of \cite{FatStrut} scaled by 1/2 i.e., for $a = 2$ and $b = 1$. Nevertheless, for the equivalent lattice $\Lambda = (1/2) \mathbb{Z}\oplus 2\mathbb{Z}$ and its dual $\Lambda^* = \mathbb{Z} \oplus 2\mathbb{Z}$, the sequence of projection lattices associated to the dual of the lattices generated by the matrices

\begin{equation}
G_w^* = 
\left[
\begin{array}{ccc}
 w+c & 1 & 0 \\
 -2 & 2 w+2 c & 1
\end{array}
\right], c \in \mathbb{Z}
\end{equation}
and projection vectors $v_w = [1, -w-c , 2w^2 + 4wc + 2c^2 + 2]$ converges to $\Lambda$ at rate $O(\left\| \bm{v} \right\|)$.

\subsection{The lattice $D_n$ (for odd $n$)}
A possible $O(1/w^2)$ convergence is shown in \cite{Sloane1} for $n = 3$. We here extend this result for any odd $n$. As the matrix $G^*$ (generator matrix for $2 D_n^*$ \cite{Sloane1} with a zero column added to the right) we choose:

$$G^* = \begin{bmatrix}
2 & 0 & \cdots & 0 & 0 & 0 \\
0 & 2 & \cdots & 0 & 0 & 0 \\
\vdots & \vdots & \ddots & \vdots & \vdots & \vdots \\
0 & 0 & \cdots & 2 & 0 & 0 \\
1 & 1 & \cdots & 1 & 1 & 0
\end{bmatrix}$$

We have then the following proposition:

\begin{proposition} There is a sequence of projection lattices that converges to a lattice which is equivalent to $D_n$, for odd $n$, at the rate $O(1/\left\|\bm{v}\right\|^{2/n})$.
\end{proposition}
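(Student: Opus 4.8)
The plan is to invoke Theorem \ref{teo1} directly, so the whole task reduces to exhibiting an explicit integer perturbation matrix $P$ such that the two hypotheses (\ref{cond1}) and (\ref{cond2}) hold for the given $G^*$ (the generator of $2D_n^*$ with an appended zero column). Once such a $P$ is produced, Theorem \ref{teo1} immediately yields a sequence of projection lattices converging to a lattice equivalent to the dual of $2D_n^*$ (which is $D_n$ up to scale) at the claimed rate $O(1/\|\bm{v}\|^{2/n})$. So the content of the proposition is really a construction, not a new analytic argument.

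First I would record the relevant Gram matrix $A^* = G^*G^{*t}$. With the chosen $G^*$ this is $2I$ on the first $n-1$ diagonal entries, the last diagonal entry equals $(n-1)\cdot 1 + 1 = n$, and the off-diagonal entries in the last row/column are the inner products of the all-ones-ish bottom row with the scaled canonical rows, namely $2$ each. Next I would write the ansatz $P = [\,p \;\; B\,]$ or, more usefully, split the condition (\ref{cond2}), $Q_1 = G^*P^t + PG^{*t} = \alpha A^*$, into its matrix entries. This is a linear system in the unknown integer entries of $P$ with parameter $\alpha$; because $G^*$ is triangular-like and sparse, the system decouples nicely. The natural guess, mirroring the $n=3$ case of \cite{Sloane1}, is to take $\alpha$ and an essentially lower-triangular perturbation whose nonzero pattern makes $Q_1$ proportional to $A^*$. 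I would solve this system explicitly, obtaining $P$ in closed form in terms of $n$ (and observing where the parity of $n$ enters).

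The oddness of $n$ is exactly where I expect the main obstacle, and it is worth isolating why. Condition (\ref{cond2}) forces certain diagonal entries of $P$ to equal $\alpha$ times half of the diagonal entries of $A^*$; since the last diagonal entry of $A^*$ is $n$, solving for an integer entry forces $\alpha n/2 \in \mathbb{Z}$, which (when $\alpha$ is constrained, as in the $a\mathbb{Z}\oplus b\mathbb{Z}$ example) pins down the admissible $\alpha$ and only admits an integer solution when $n$ is odd. So the plan's crux is to carry out the integrality bookkeeping carefully and show that for odd $n$ one can choose $\alpha$ (likely $\alpha$ itself even, or $\alpha=2$) making every entry of $P$ integral, while for even $n$ the same system has no integer solution. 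This is analogous to the parity obstruction already seen in Section 4.1 for $a\ne 1$.

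Finally I would verify condition (\ref{cond1}), $\det(H_w) = \pm 1$ for all $w$, where $H_w$ is the last $n$ columns of $G_w^* = wG^* + P$. Here I would exploit the triangular structure: with the sparse choice of $P$, $H_w$ is (up to permutation of rows/columns) lower or upper triangular, so its determinant is the product of its diagonal entries, and I would choose the free entries of $P$ precisely so that this product is $\pm 1$ independently of $w$ — mimicking the device $akP_{23}=\pm1$, $P_{13}=0$ from the $a\mathbb{Z}\oplus b\mathbb{Z}$ computation. With both conditions confirmed, Theorem \ref{teo1} applies verbatim and gives the stated convergence rate, completing the proof. The only genuinely delicate point is the simultaneous satisfaction of the integrality of $P$ (needing odd $n$) and the unimodularity of $H_w$; everything else is a direct substitution into the already-proved theorem.
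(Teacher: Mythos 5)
Your high-level strategy coincides with the paper's: exhibit an integer perturbation matrix $P$ satisfying \eqref{cond1} and \eqref{cond2} for the given $G^*$ and then quote Theorem \ref{teo1}. But the proposal never actually produces $P$, and that construction is the entire mathematical content of the proposition; ``I would solve this system explicitly'' leaves the one nontrivial step undone. Moreover, the structural guesses you make point in the wrong direction. The paper's $P$ has $\alpha=0$ (so $Q_1=G^*P^t+PG^{*t}=0$ exactly) and is \emph{not} triangular: its nonzero entries lie on two anti-diagonals, $P_{i,\,n-i}=(-1)^i$ and $P_{i,\,n-i+2}=(-1)^{i+1}$, plus a few entries in the last columns. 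Consequently $H_w$ is not triangular up to permutation, its determinant is not a product of diagonal entries, and $\det H_w=1$ must be established by elementary row and column operations --- and this determinant computation, together with the sign consistency of the anti-diagonal pattern (skew-symmetry of the relevant block forces $(-1)^{n-i}=-(-1)^{i}$, i.e.\ $n$ odd), is where the parity hypothesis actually enters.

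Your proposed parity mechanism is also incorrect as stated. With the given $G^*$ one has $A^*_{ii}=4$ for $i\le n-1$ (not $2$) and $A^*_{nn}=n$; the diagonal instance of condition \eqref{cond2} in the last row reads $\sum_{j=1}^{n}P_{nj}=\alpha n/2$, which is an integrality constraint on $\alpha$ rather than on $P$, and it is \emph{weaker} for even $n$ (any integer $\alpha$ works) than for odd $n$ (it forces $\alpha$ even); with the paper's choice $\alpha=0$ it is vacuous. So this cannot be the reason odd $n$ is needed, and it certainly does not establish that even $n$ fails --- the paper only reports a computational check that $D_4$ admits no simultaneous solution of \eqref{cond1} and \eqref{cond2}. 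To complete the proof you must write down the explicit $P$, verify $G^*P^t+PG^{*t}=0$ by direct multiplication, and carry out the determinant computation showing $\det H_w=1$ for all $w$ when $n$ is odd.
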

\begin{proof}
The proof follows by choosing a suitable perturbation matrix. In this case, take $P$ such that

\begin{equation}P_{ij} = \left\{\begin{array}{lc}
(-1)^{i} & \mbox{ if } j = n - i \mbox{ and } 1 \leq i \leq n \mbox{,}\\
(-1)^{i+1} & \mbox{ if } j = n - i + 2 \mbox{ and } 2 \leq i \leq n-1 \mbox{,} \\
\,\,\,\,\,\, 1 & \mbox{ if } (i,j) \in \left\{ (n-1,n+1), (n,n+1) \right\} \\
\,\,\,\,\,\ 1 & \mbox{ if } (i,j) = (1,n) \\
\,\,\,\,\,\, 0 & \mbox{otherwise}
\end{array} \right.\end{equation}
By direct multiplication, one can prove that $G^*P^t + PG^{*t} = 0$ and by elementary operations on the matrix $H_w$ it is possible to prove that $\det H_w = 1 \,\, \forall w $ provided that $n$ is an odd number, thus ensuring that the hypotheses of Theorem \eqref{teo1} hold and the result follows. 
\end{proof}
To illustrate this example, we exhibit below the matrices $G_w^*$, $A_w^*$ and $P$ as well as the vector $\bm{v}_w$ for $n = 5$.

\begin{equation}G_w^*=\left[
\begin{array}{cccccc}
 2 w & 0 & 0 & -1 & 1 & 0 \\
 0 & 2 w & 1 & 0 & -1 & 0 \\
 0 & -1 & 2 w & 1 & 0 & 0 \\
 1 & 0 & -1 & 2 w & 0 & 1 \\
 w & w & w & w & w & 1
\end{array}
\label{pertDn}
\right]\end{equation}
\\ 
$$\\ A_w^* = \left[
\begin{array}{ccccc}
 4 w^2+2 & -1 & -1 & 0 & 2 w^2 \\
 -1 & 4 w^2+2 & 0 & -1 & 2 w^2 \\
 -1 & 0 & 4 w^2+2 & 0 & 2 w^2 \\
 0 & -1 & 0 & 4 w^2+3 & 2 w^2+1 \\
 2 w^2 & 2 w^2 & 2 w^2 & 2 w^2+1 & 5 w^2+1
\end{array}
\right]\mbox{, }$$ \\ $$P = \left[
\begin{array}{cccccc}
 0 & 0 & 0 & 1 & -1 & 0 \\
 0 & 0 & -1 & 0 & 1 & 0 \\
 0 & 1 & 0 & -1 & 0 & 0 \\
 -1 & 0 & 1 & 0 & 0 & 1 \\
 0 & 0 & 0 & 0 & 0 & 1
\end{array}
\right] \mbox{ and } \bm{v}_w = \left[
\begin{array}{c}
  1 \\
 4 w^3+2 w^2+3 w+1 \\
 -4 w^3+2 w^2-3 w+1 \\
 8 w^4+8 w^2+w+1 \\
 8 w^4+8 w^2-w+1 \\
 16 w^5+20 w^3+5w
\end{array}
\right]$$ \\ 

\begin{remark}
For $n = 3$ and a suitable change of basis, the perturbation given by Equation \eqref{pertDn} is precisely the same as the one described in \cite{Sloane1}, Section 4.
\end{remark}

\subsection{$D_n^*$}
Here is a case where $\alpha \neq 0$ is actually necessary. Let us start with the lattice $D_3^*$. As a basis to $({D_3^{*}})^* = D_3$ we take:

\begin{equation}
\bar{G}^* = \left[
\begin{array}{ccc}
 -2 & 0 & 0 \\
 1 & -1 & 0 \\
 0 & 1 & -1 
\end{array}
\right],
\end{equation}
and we want to find a perturbation matrix $P \in \mathbb{Z}_{3 \times 4}$ in order to ensure conditions \eqref{cond1} and \eqref{cond2} of Theorem \eqref{teo1}. Again, starting with a general perturbation, condition \eqref{cond1} with $\alpha = 0$ is equivalent to: 

\begin{equation}
\begin{split}
P_{11} & = 0,P_{12}= -2 P_{23}+2 P_{31}-2 P_{33},P_{21}= P_{23}-P_{31}+P_{33}, \\
P_{22} & = P_{23}-P_{31}+P_{33},P_{13}= -2 P_{23}-2 P_{33},P_{32}= P_{33}
\end{split}
\end{equation}

Besides, by explicit calculating the determinant of $H_w$ (which is a polynomial of degree $2$ in $w$), it is easy to show that $P_{1,4}$ (coefficient of $w^2$) must vanish. Under that condition, we have:

\begin{equation}H_w = \left[
\begin{array}{ccc}
 -2 P_{23}+2 P_{31}-2 P_{33} & -2 P_{23}-2 P_{33} & 0 \\
 -w+P_{23}-P_{31}+P_{33} & P_{23} & P_{24} \\
 w+P_{33} & P_{33}-w & P_{34}
\end{array}
\right]
\end{equation}
and clearly $\det H_w$ is even i.e., $\det H_w \neq \pm 1$ what shows that there is no $P$ such that, for $\alpha = 0$, the conditions \eqref{cond1} and \eqref{cond2} simultaneously hold. However, following an analogous argument for $\alpha = 1$, we find the perturbation matrix:

\begin{equation}
P = \left[
\begin{array}{cccc}
 -1 & 1 & 1 & 0 \\
 0 & -1 & 0 & 1 \\
 0 & 0 & -1 & -2
\end{array}
\right].
\end{equation}
We extended this result for any $n$ through the following proposition:
\begin{proposition}
There is a sequence of projection lattices that converges to $D_n^*$ for any $n \geq 1$ at the rate $O(1/\left\|\bm{v}\right\|^{2/n})$. 
\end{proposition}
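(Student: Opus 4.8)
The plan is to apply Theorem \ref{teo1} with $\Lambda = D_n^*$, so that $\Lambda^* = D_n$, and to reduce everything to exhibiting one explicit integer perturbation matrix $P$ meeting conditions \eqref{cond1} and \eqref{cond2}; the convergence rate and the growth of $\|\bm{v}_w\|_\infty$ are then immediate from the theorem. First I would fix a generator matrix for $D_n$ that generalizes the $D_3$ choice, namely the lower bidiagonal $\bar{G}^*$ whose first row is $-2\bm{e}_1$ and whose $i$-th row is $\bm{e}_{i-1}-\bm{e}_i$ for $2\le i\le n$. With this basis $A^*=\bar{G}^*\bar{G}^{*t}$ is tridiagonal, with diagonal $(4,2,\dots,2)$ and sub/super-diagonal $(-2,-1,\dots,-1)$; this is precisely the pattern that $\alpha A^*$ must reproduce.

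Next I would solve condition \eqref{cond2}. Writing $g_i$ and $p_i$ for the rows of $G^*=[\bar{G}^*\ \bm{0}]$ and of $P$, the requirement $Q_1=\alpha A^*$ is the banded linear system $g_i\cdot p_j+g_j\cdot p_i=\alpha A^*_{ij}$, which has many solutions once $\alpha$ is fixed. As already seen for $D_3^*$, the choice $\alpha=0$ is obstructed — the even first row forces $\det H_w$ to be even — so $\alpha$ must be odd and I would take $\alpha=1$. I would then write down a sparse candidate $P$ generalizing
$$P=\begin{bmatrix} -1 & 1 & 1 & 0 \\ 0 & -1 & 0 & 1 \\ 0 & 0 & -1 & -2 \end{bmatrix},$$
with diagonal entries $-1$ and a controlled family of $\pm 1$ (and a single $-2$) off-diagonal and last-column entries dictated by the tridiagonal target, and verify $G^*P^t+PG^{*t}=A^*$ by direct multiplication, which is routine because both factors are banded.

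The main obstacle is condition \eqref{cond1}: proving $\det H_w=\pm 1$ for \emph{every} $w\in\mathbb{N}$. Since $H_w$ is made of the last $n$ columns of $wG^*+P$ and $G^*$ has a zero last column, only $n-1$ of those columns are affine in $w$, so $\det H_w$ is a priori a polynomial of degree at most $n-1$; the content of the argument is that all coefficients of $w^k$ with $k\ge1$ cancel and the constant term is $\pm1$, exactly the collapse that made $H_w\equiv -1$ in the $D_3^*$ case. I would establish this by elementary row and column operations on the banded matrix $H_w$, or equivalently by a cofactor recursion along its first row and column, producing a relation of the form $\det H_w^{(n)}=\pm\det H_w^{(n-1)}+(\text{$w$-free term})$ whose $w$-dependence telescopes away, with $n=1,2,3$ checked by hand. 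Verifying that the proposed $P$ is the correct generalization — i.e.\ that it simultaneously satisfies \eqref{cond2} and makes this recurrence $w$-independent and terminate at $\pm1$ — is the delicate step, and is where I expect the real work to lie.

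Once \eqref{cond1} and \eqref{cond2} hold, Theorem \ref{teo1} applies verbatim with $\alpha=1$: each $\Lambda_w=(\Lambda_w^*)^*$ is a projection lattice of $\mathbb{Z}^{n+1}$, and since $\det D_n=4$ we get $\|\bm{v}_w\|_\infty=|2w^n+O(w^{n-1})|$ and $\|A-c_wA_w\|_\infty=O(1/w^2)=O(1/\|\bm{v}_w\|_\infty^{2/n})$ for $c_w=(1/w-1/2w^2)^2$, giving convergence to a lattice equivalent to $D_n^*$ at the claimed rate.
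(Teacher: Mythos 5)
Your high-level plan is exactly the paper's: take the bidiagonal basis for $\Lambda^*=D_n$ (first row $-2\bm{e}_1$, then $\bm{e}_{i-1}-\bm{e}_i$), observe that $\alpha=0$ is obstructed by parity, set $\alpha=1$, find an integer $P$ satisfying \eqref{cond1} and \eqref{cond2}, and invoke Theorem \ref{teo1} together with $\det D_n=4$ to get the rate. But the proposal stops precisely at the step that constitutes the proof: you never exhibit the perturbation matrix for general $n$, and you yourself flag the verification that your (unspecified) $P$ works as ``where I expect the real work to lie.'' That is a genuine gap, not a routine omission, because conditions \eqref{cond1} and \eqref{cond2} are a heavily overdetermined system and the existence of a simultaneous integer solution for every $n$ is the entire content of the proposition.

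Moreover, your guess about the shape of $P$ --- ``diagonal entries $-1$ and a controlled family of $\pm 1$ (and a single $-2$)'' --- does not match what actually works. The paper's perturbation has $P_{ii}=-1$, $P_{1j}=1$ for $j\le n$, zeros elsewhere except in the last column, where $P_{i,n+1}=(-1)^{i}\binom{n-1}{i-1}$ for $i\ge 2$: the last column carries alternating binomial coefficients, which grow like $\binom{n-1}{\lfloor n/2\rfloor}$ and are emphatically not bounded by $2$. (Even the paper's own $D_3^*$ example is a different solution of the same system, so extrapolating its literal entries is misleading.) It is exactly this binomial last column that makes the cofactor recursion you describe telescope to $\det H_w=(-1)^{n+1}$ independently of $w$; with a generic sparse $\pm 1$ pattern the degree-$(n-1)$ polynomial $\det H_w$ will not collapse to a unit. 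To complete the argument you would need to write down this $P$, check $G^*P^t+PG^{*t}=A^*$ (routine, both factors banded), and then carry out the row/column reduction of $H_w$ to $\pm 1$ --- none of which is present in the proposal.
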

\begin{proof}
Here, the perurbation matrix is given by
\begin{equation}P_{ij} = \left\{\begin{array}{lc}
(-1)^{i} \displaystyle \genfrac{(}{)}{0pt}{}{n-1}{i-1} & \mbox{ if } j = n +1 \mbox{ and } i \geq 2 \mbox{,}\\
\,\, -1 & \mbox{ if } i = j, \\
\,\,\,\,\,\, 1 & \mbox{ if } i = 1 \mbox{ and } j \leq n, \\

\,\,\,\,\,\, 0 & \mbox{otherwise}
\end{array}. \right.\end{equation}
Again, by direct multiplication we can see that $G^*P^t + PG^{*t} = A^*$  ($\alpha = 1$) and by applying elementary operations to the matrix $H_w$ we see that $\det H_w = (-1)^{n+1}$ proving the statement. 
\end{proof}

For $n = 3$ the projection family associated to the vectors $$\bm{v_w} = \left[1, -2w^2+w+1, -2w^2-3w-2, 2w^3+3w^2+3w+1\right]$$ converges to the famous body-centered cubic lattice, which has the best covering density in three dimensions \cite{Sloane1}, at a rate $O(1/\left\|\bm{v}\right\|^{2/3}$).

\subsection{The lattice $E_8$}

For the lattice $E_8$, the problem of finding a perturbation matrix that speeds the convergence rate up to $O(1/w^2)$ has $72$ integer variables. After some simplifications (by explicitly solving equation \eqref{cond1}) we can reduce this problem to $36$ integer variables and $7$ non-linear restrictions, corresponding to the polynomial equality \eqref{cond2}, which has a high computational complexity. We do not know if there exists an exact solution to this problem. Hence, we generate sub-optimal solutions, considering the problem \eqref{eq:PNLIrel} and the Frobenius norm, which yields to a quadratic integer problem (IQP), and show that these solutions have good (exponential) gains in comparison to the sequences in \cite{Sloane1}. We also compare different equivalent integer representations for the $E_8$ lattice. It is worth reminding that $\left\| M \right\|_F \geq \left\| M \right\|_{\infty}$ for any matrix $M$.

The first representation for $\Lambda^* = E_8$ is the matrix \cite[p. 121]{SloaneBook}, the same as in \cite{Sloane1}. The second one is the matrix obtained by applying Construction A  \cite[ch.5]{SloaneBook} to the extended Hamming code  $\mathcal{H}(8,4)$ \cite[ch 3,sec 2.3]{SloaneBook}. We show the perturbation matrices found in both cases (respectively, $P_1$ and $P_2$) and compare the curves of center density versus the the logarithm to the basis 2 of the euclidean norm of the projection vector (Figure \ref{fig:E8}). For all these constructions, the rate of convergence of the produced sequences is $O(1/w)$. \\
\begin{equation*}
P_1 = 
\begin{bmatrix}
 0 & 1 & 0 & 0 & 0 & 0 & 0 & 0 & 0 \\
 0 & -1 & 1 & 0 & 0 & 0 & 0 & 0 & 0 \\
 0 & -1 & -1 & 1 & 0 & 0 & 0 & 0 & 0 \\
 0 & 1 & -1 & -1 & 1 & 0 & 0 & 0 & 0 \\
 0 & 0 & 1 & -1 & -1 & 1 & 0 & 0 & 0 \\
 0 & 0 & 0 & 1 & -1 & -1 & 1 & 0 & 0 \\
 0 & 0 & 0 & 0 & 1 & -1 & -1 & 1 & 0 \\
 0 & 0 & 0 & 0 & 0 & 0 & 0 & 0 & 1
\end{bmatrix},\end{equation*}
\begin{equation*}
 P_2 = \begin{bmatrix}
 0 & 1 & 0 & 0 & 0 & 0 & 0 & 0 & 0 \\
 0 & 0 & 1 & 0 & 0 & 0 & 0 & 0 & 0 \\
 0 & 0 & 0 & 1 & 0 & 0 & 0 & 0 & 0 \\
 0 & -1 & 0 & 1 & 1 & 0 & 0 & 0 & 0 \\
 0 & 1 & 0 & 0 & 0 & 1 & 0 & 0 & 0 \\
 0 & 0 & 1 & 1 & 0 & 0 & 1 & 0 & 0 \\
 0 & 0 & -1 & 1 & 0 & 0 & 0 & 1 & 0 \\
 0 & 0 & 0 & 0 & 1 & 1 & -1 & 0 & 1
\end{bmatrix}
\end{equation*}

\begin{figure}[htb!]

\includegraphics[scale=0.6]{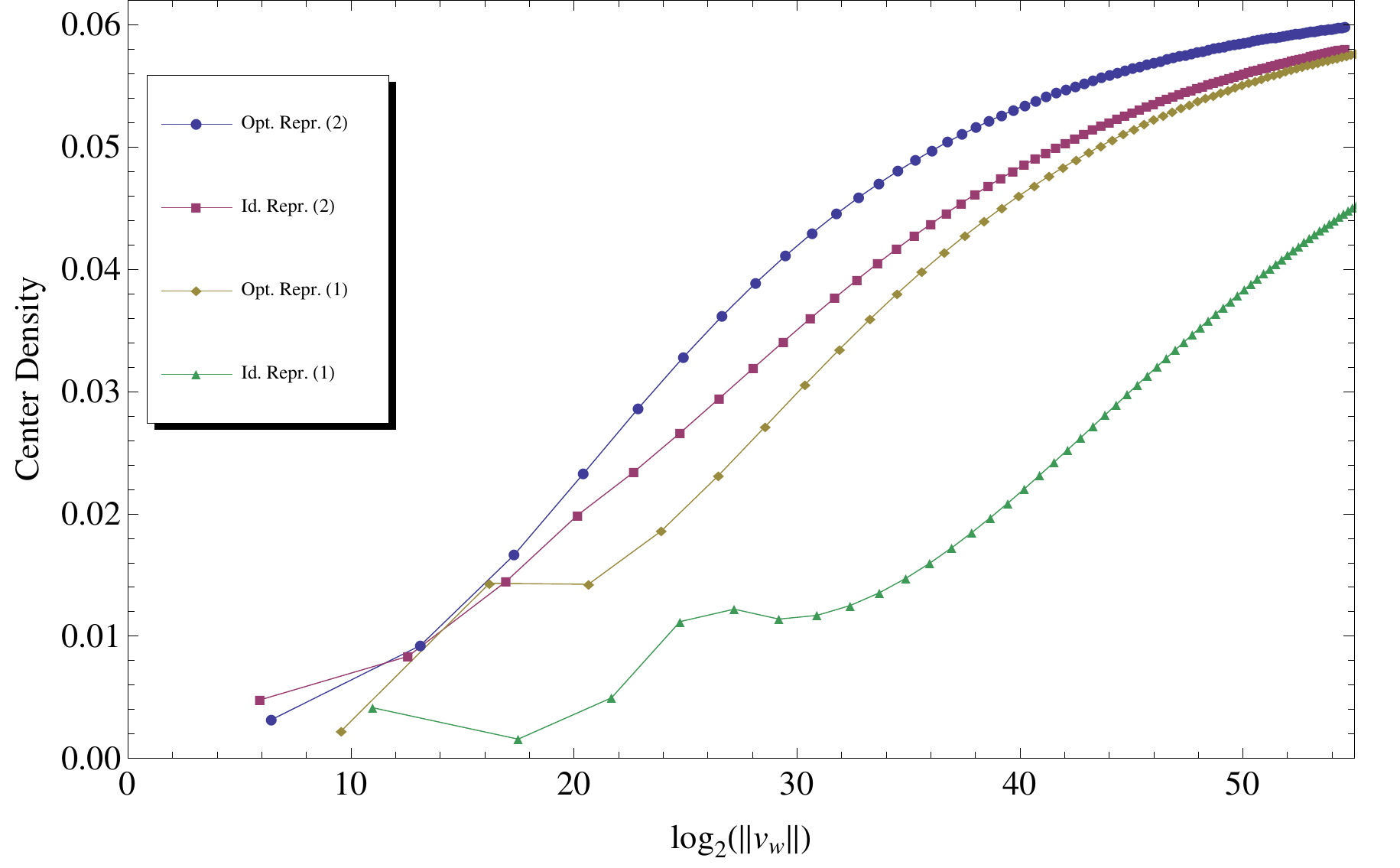}
\caption{The first two curves from the bottom to the top were obtained with the same generator matrix for $E_8$ as in \cite{Sloane1} and perturbation matrices $[0 \,\, I_8]$ and $P_1$, respectively. The last two correspond to representation $G_2$ above and perturbation matrices $[0 \,\, I_8]$ and $P_2$, respectively.}
\label{fig:E8}
\end{figure}

%

\section{Conclusion}
In this paper we address the problem of finding sequences of projection lattices with a good rate ``density versus length of the projection vector''. With a subtle modification of the Lifting Construction \cite{FatStrut}, we prove a sufficient condition for constructing projection lattices sequences that converge with order $O( 1/ \left\| \bm{v} \right\|^{2/n})$ to target lattices whose dual are integer. We then construct explicit examples of such sequences for some well-known lattices, such as $D_n$ (odd $n$) and $D_n^*$. We also show examples of good projection lattices sequences for the $E_8$ lattice that do not satisfy condition \eqref{cond2}. 

The question whether it is always possible to speed the convergence rate up to $O(1/\left\| \bm{v} \right\|^{2/n})$ remains open. Also explicit constructions for $D_n$ (even $n$) as well as other important lattices are let for further work. These constructions, however, seem to require totally different techniques. For instance, it is possible to verify computationally through an exhaustive search that there is no simultaneous solution for equations \eqref{cond1} and \eqref{cond2} for the lattice $D_4$.

We finish by exhibiting a table of which is, to our knowledge, the best projection lattices sequences in dimensions from $3$ to $8$, in the sense discussed in this paper (except $n = 6$)\footnote{To our knowledge, there is no integer representation in $\mathbb{R}^6$ for the best $6$-dimensional lattice packing $E_6$, hence the approach here cannot be employed. The projections of $\mathbb{Z}^7$ onto $\bm{v}^\perp$ for the vector presented here converge to $D_6$.}.

{\footnotesize 
\begin{table}[h]
\begin{center}
\begin{tabular}{| c || c | c |}
\hline
\textbf{n} ($\Lambda$) & \textbf{Vector}  \\
\hline
3 ($D_3$) & \footnotesize{ $( 1, 2 w^2+w+1, 2 w^2-w+1, -4 w^3-3 w )$} \\
\hline
4 ($D_4$)& \footnotesize{ $(1, -1+2 w, 4 w^2-2 w+1, 8 w^3-4 w^2+1, 8 w^4-8 w^3+4 w^2 )$} \\
\hline
\multirow{2}{*}{5 ($D_5$)} & \footnotesize{ $ (1, 4 w^3+2 w^2+3 w+1, -4 w^3+2 w^2-3 w+1$} \\ & \footnotesize{ $8 w^4+8 w^2+w+1, 8 w^4+8 w^2-w+1, 16 w^5+20 w^3+5w )$}
\\
\hline
\multirow{2}{*}{6 ($D_6$)} & \footnotesize{ $(1, 2 w-1, 4 w^2-2 w+1, 8 w^3-4 w^2+1, 16 w^4-8 w^3+4 w^2+1 $} \\ & \footnotesize{ $ 32 w^5-16 w^4+4 w^2+2 w-1, 32 w^6-32 w^5+16 w^4+2 w^2-2 w+1 )$ }
\\
\hline
\multirow{4}{*}{7 ($E_7$)} & \footnotesize{ $ (1, 1- 2 w, 4 w^2-4 w+2,-8 w^3+12 w^2-10 w+3, $} \\ & \footnotesize{ $8 w^4-16 w^3+18 w^2-10 w+2 $} \\ & \footnotesize{ $-8 w^5+16 w^4-30 w^3+28 w^2-16 w+4$} \\ & \footnotesize{ $8 w^6-16 w^5+38 w^4-44 w^3+36 w^2-16 w+3$} \\ & \footnotesize{ $ -8 w^7+16 w^6-46 w^5+60 w^4-70 w^3+50 w^2-24 w+5)$}
\\
\hline
\multirow{3}{*}{8 ($E_8$)} & \footnotesize{ $(
1, -2 w, 4 w^2, -8 w^3, 16 w^4+8 w^3-2 w, -16 w^5-8 w^4-4 w^3+4 w^2+w$} \\ & \footnotesize{ $16 w^6+8 w^5+12 w^4-3 w^2, -16 w^7-8 w^6-28 w^5+7 w^3+6 w^2$} \\ & \footnotesize{ $16 w^8+8 w^7+44 w^6+8 w^5-3 w^4-10 w^3-3 w^2+w
)$ }
\\
\hline
\end{tabular}

\caption{Best families of projection lattices of $\mathbb{Z}^{n+1}$ $n = 3$ to $8$ (except $n = 6$), converging to the target lattice $\Lambda$.} 
\end{center}
\end{table}}

\section*{Acknowledgments}
The authors would like to thank the anonymous referees for pointing out some mistakes in the first version of the paper. The authors also acknowledge Prof. Sueli Costa for fruitful discussions on projection lattices. This work was supported by S\~ao Paulo Research Foundation (FAPESP) under grant 2009/18337-6 for Antonio Campello and 2011/01096-6 for Jo\~ao Strapasson.

\bibliographystyle{plain}
\bibliography{versao_final}

\end{document}